\begin{document}

\newenvironment{proof}[1][Proof]{\textbf{#1.} }{\ \rule{0.5em}{0.5em}}

\newtheorem{theorem}{Theorem}[section]
\newtheorem{definition}[theorem]{Definition}
\newtheorem{lemma}[theorem]{Lemma}
\newtheorem{remark}[theorem]{Remark}
\newtheorem{proposition}[theorem]{Proposition}
\newtheorem{corollary}[theorem]{Corollary}
\newtheorem{example}[theorem]{Example}
\numberwithin{equation}{section}

%\newcommand{\bfi}{\bfseries\itshape}

%\newsavebox{\savepar}
%\newenvironment{boxit}{\begin{lrbox}{\savepar}
%\begin{minipage}[b]{15.5cm}}{\end{minipage}\end{lrbox}
%\fbox{\usebox{\savepar}}}

\title{{\bf Invariant critical sets of conserved quantities}}
\author{Petre Birtea and Dan Com\u{a}nescu}

\date{}
\maketitle

\begin{abstract}
For a dynamical system we will construct various invariant sets
starting from its conserved quantities. We will give conditions under
which certain solutions of a nonlinear system are  also solutions
for a simpler dynamical system, for example when they are solutions for
a linear dynamical system. We will apply these results to the
example of Toda lattice.
\end{abstract}

{\bf MSC}: 34C45, 37Cxx, 70S10.

{\bf Keywords}: dynamical systems, invariant sets, conserved quantities, perturbed systems, Toda lattice.

\section{Introduction.}

A particle moving in the Newtonian gravitational field is known as
the Kepler problem. The equations of motion are
$$\ddot{x}=-\frac{x}{\parallel x\parallel^3},$$ where $x \in
\mathbb{R}^3\setminus\{0\}$ is the position vector.

It is well known that the motions for the Kepler problem are
planar. If we consider the plane $Ox_1x_2$, the equations of
motion becomes
$$
\left\{
\begin{array}{ll}
    \dot{x_1}=y_1\\
    \dot{x_2}=y_2\\
    \dot{y_1}=-\frac{x_1}{(x_1^2+x_2^2)^{3/2}}\\
    \dot{y_2}=-\frac{x_2}{(x_1^2+x_2^2)^{3/2}}\\
\end{array}
\right.
$$

These equations are Hamiltonian with the standard symplectic form
on $\mathbb{R}^4$ and the Hamiltonian function
$H=\frac{1}{2}(y_1^2+y_2^2)-\frac{1}{\sqrt{x_1^2+x_2^2}}$. From
Kepler's second law we have another conserved quantity given by
$A=x_1y_2-x_2y_1$.

For $a>0$, consider the following conserved quantity
$\emph{K}=H+\frac{1}{a^3}A$. After a straightforward computation
we will obtain that $\nabla\emph{K}=0$ if and only if
$y_1=\frac{1}{a^3}x_2$, and $y_2=-\frac{1}{a^3}x_1$ and
$||(x_1,x_2)||=a^2$. Equivalently, the set $\{\nabla\emph{K}=0\}$
is equal with the set
$\{(x_1,x_2,y_1,y_2)\mid(x_1,x_2)\cdot(y_1,y_2)=0,\,\,\,
\textit{and}\,\,\, ||(x_1,x_2)||=a^2,\,\,\,
\textit{and}\,\,\,||(y_1,y_2)||=\frac{1}{a}\,\,\,\textit{and}\,\,\,sgn(y_1)=sgn(x_2)
\}$ which is invariant under the dynamics and is filled with
solutions that represent uniform circular motions moving
clockwise. These particular motions are also solutions for the
linear Hamiltonian system
$$
\left\{
\begin{array}{ll}
    \dot{x_1}=-x_2\\
    \dot{x_2}=x_1\\
    \dot{y_1}=-y_2\\
    \dot{y_2}=y_1\,,\\
\end{array}
\right.
$$
where the Hamiltonian function is $A$.

Given the above analysis, we can raise at least two questions. Is
it true that for a dynamical system that admits conserved quantities, the set of points where the gradients of these conserved quantities
are zero, is an invariant set? When do two Hamiltonian systems have
common solutions? In what follows, we will give an answer to
these questions.

For the first question, the answer is positive and it is given in
section two where we will also discuss various generalizations of
this answer. In section three, we will present the conditions
under which we can give an answer to the second question. In
section four we will illustrate these results for the example of
Toda lattice. Detailed computations are presented in the Appendix.

\section{Invariant sets}

Let $f:\mathbb{R}^n\rightarrow \mathbb{R}^n$ be a $C^q$, $q\geq 1$
function which generates the differential equation
\begin{equation}\label{sys}
\dot{x}=f(x).
\end{equation}

We suppose that equation (\ref{sys}) admits a $C^q$ vectorial
conserved quantity $\mathbf{F}:\mathbb{R}^n\rightarrow \mathbb{R}^k$
with $k\leq n$. We denote by $F_1,...,F_k$ the components of
$\mathbf{F}$.
For $s\in \{0,...,k\}$ we introduce the sets:

\begin{equation}\label{Mi}
M_{(s)}^F=\{x\in \mathbb{R}^n\,|\,rank \nabla \mathbf{F}(x)=s\}
\end{equation}
where $\nabla\mathbf{F}(x)$ is the Jacobian matrix

\begin{equation}
\nabla \mathbf{F}(x)=
\left(%
\begin{array}{ccc}
  \frac{\partial F_1}{\partial x_1}(x) & ... & \frac{\partial F_1}{\partial x_n}(x) \\
  ... & ... & ... \\
  \frac{\partial F_k}{\partial x_1}(x) & ... & \frac{\partial F_k}{\partial x_n}(x) \\
\end{array}%
\right).
\end{equation}

For $r\in \{1,...,q\}$ we introduce the sets:
\begin{equation}\label{Nj}
N_{(r)}^F=\{x\in
\mathbb{R}^n\,|\,\partial^{\mathbf{\alpha}}F_i(x)=0,\,\,\forall
i\in\{1,...,k\},\,\,\,\forall \mathbf{\alpha}\in
\{1,...,n\}^l,\,\,\,\forall l\leq r \}
\end{equation}
where we note $\partial^{\mathbf{\alpha}}F_i=\frac{\partial ^l
F_i}{\partial x_{\alpha_1}...\partial x_{\alpha_l}}$ if
$\mathbf{\alpha}=(\alpha_1,...,\alpha_l)$.

\begin{remark}
We observe that
\begin{equation}
M_{(0)}^F=N_{(1)}^F=\{x\in \mathbb{R}^n\,|\,\frac{\partial
F_i}{\partial x_j}(x)=0,\,\,i\in \{1,...,k\},\,\,j\in
\{1,..,n\}\}.
\end{equation}
\end{remark}

The set $\{M_{(s)}^F\}$ with $s\in \{0,...,k\}$ is a partition of
$\mathbb{R}^n$. A critical point of $\mathbf{F}$ is a point in
$\mathbb{R}^n$ at which the rank of the matrix $\nabla
\mathbf{F}(x)$ is less than the maximum rank. A critical value is
the image under $\mathbf{F}$ of a critical point. The set of
critical points of $\mathbf{F}$ is
\begin{equation}
M_c^F=\cup_{s=0}^{k-1}M_{(s)}^F.
\end{equation}
Using Sard's Theorem (see \cite{sard}) we have that the set of
critical values $\mathbf{F}(M_c^F)$ is of $k$-dimensional measure
zero providing that $q\geq n-k+1$.

\begin{remark} We also have the obvious inclusions $N_{(q)}^F\subseteq N_{(q-1)}^F\subseteq ...\subseteq N_{(1)}^F.$

\end{remark}

\begin{theorem}\label{principal theorem}
The sets $M_{(s)}^F$ are invariant under the dynamics generated by
the differential equation (\ref{sys}).
\end{theorem}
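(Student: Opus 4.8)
The key structural fact is that $\mathbf{F}$ is a conserved quantity for (\ref{sys}), which means $\mathbf{F}(\varphi_t(x)) = \mathbf{F}(x)$ for all $t$ where $\varphi_t$ is the flow — equivalently $\nabla\mathbf{F}(x)\cdot f(x) = 0$ for all $x$. The plan is to show that if $x_0 \in M_{(s)}^F$ then the entire integral curve through $x_0$ stays in $M_{(s)}^F$, i.e. the rank of $\nabla\mathbf{F}$ is constant along solutions. I would do this by differentiating the conservation identity and exploiting the fact that along a solution curve, $\varphi_t$ is a diffeomorphism, so composition with it preserves ranks if we can relate $\nabla\mathbf{F}(\varphi_t(x))$ to $\nabla\mathbf{F}(x)$ via an invertible linear map.

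**Key steps.** First, write $x(t) = \varphi_t(x_0)$ for the solution with $x(0) = x_0$, defined on its maximal interval. Differentiating $\mathbf{F}(x(t)) = \mathbf{F}(x_0)$ is not quite enough; instead I would use the variational equation. Let $\Phi(t) = D\varphi_t(x_0)$ be the fundamental matrix solution of the linearized equation $\dot{\Phi} = Df(x(t))\Phi$, $\Phi(0) = I$; this $\Phi(t)$ is invertible for all $t$ in the maximal interval (its determinant satisfies Liouville's formula and never vanishes). Differentiating the identity $\mathbf{F}(\varphi_t(x)) = \mathbf{F}(x)$ with respect to the initial condition $x$ at $x = x_0$ gives, by the chain rule,
\begin{equation}\label{rankpreserve}
\nabla\mathbf{F}(x(t))\cdot \Phi(t) = \nabla\mathbf{F}(x_0).
\end{equation}
Since $\Phi(t)$ is invertible, $\nabla\mathbf{F}(x(t)) = \nabla\mathbf{F}(x_0)\cdot\Phi(t)^{-1}$, and multiplying a matrix on the right by an invertible matrix does not change its rank. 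Hence $\operatorname{rank}\nabla\mathbf{F}(x(t)) = \operatorname{rank}\nabla\mathbf{F}(x_0) = s$ for all $t$, so $x(t) \in M_{(s)}^F$, proving invariance.

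**The main obstacle.** The delicate point is justifying identity (\ref{rankpreserve}): it requires differentiating the conserved-quantity relation with respect to initial data, which presupposes the $C^1$ (indeed $C^q$) dependence of the flow on initial conditions and the legitimacy of interchanging the $x$-derivative with the time evolution. This is standard from the theory of ODEs (the flow of a $C^q$ vector field is $C^q$ jointly in $(t,x)$, and the derivative $D\varphi_t(x_0)$ solves the variational equation), but it should be invoked carefully. An alternative, more self-contained route avoiding the variational equation: differentiate $\mathbf{F}(x(t)) = $ const directly to get $\nabla\mathbf{F}(x(t))f(x(t)) = 0$, then differentiate this once more in $t$ and iterate to control how $\operatorname{rank}\nabla\mathbf{F}$ can change; however this gets combinatorially heavy and does not cleanly give rank constancy, so I expect the variational-equation argument to be the cleaner presentation. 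I would also note as a corollary that the same computation gives invariance of the sets $N_{(r)}^F$, since $\nabla\mathbf{F}(x(t)) = \nabla\mathbf{F}(x_0)\Phi(t)^{-1}$ shows all first partials vanish along the curve whenever they vanish at $x_0$, and higher-order versions follow by differentiating (\ref{rankpreserve}) further.
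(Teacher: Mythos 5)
Your proof is correct and follows essentially the same route as the paper: differentiate the conservation identity $\mathbf{F}(\Phi_t(x))=\mathbf{F}(x)$ in the initial condition to obtain $\nabla\mathbf{F}(\Phi_t(x))\,\nabla\Phi_t(x)=\nabla\mathbf{F}(x)$, and conclude rank preservation from the invertibility of $\nabla\Phi_t(x)$. Your justification of that invertibility via the variational equation and Liouville's formula is in fact slightly cleaner than the paper's, which unnecessarily excludes equilibrium points (where invariance is trivial anyway).
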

\begin{proof} Because $\mathbf{F}$ is a conserved quantity, we have
$$\mathbf{F}(\Phi _t(x))=\mathbf{F}(x),$$
where $\Phi _t:\mathbb{R}^n\rightarrow \mathbb{R}^n$ is the flow
generated by (\ref{sys}).
Differentiating, we have
\begin{equation}\label{formula principala}
\nabla \mathbf{F}(\Phi_t(x))\nabla \Phi_t(x)=\nabla \mathbf{F}(x).
\end{equation}

As $\nabla \Phi_t(x)$ is an invertible matrix for any $x\in
\mathbb{R}^n$ which is not an equilibrium point for (\ref{sys}),
(see \cite{abra}), we have that
$$rank \nabla \mathbf{F}(\Phi_t(x))=rank \nabla \mathbf{F}(x),$$
which implies the stated result.
\end{proof}
\medskip

As a consequence we will obtain the following well known result which was applied for studying invariant sets of various mechanical systems, see for example \cite{irtegov}.

\begin{corollary}
The set of critical points of $\mathbf{F}$ is an invariant set of
the dynamics generated by the differential equation \eqref{sys}.
\end{corollary}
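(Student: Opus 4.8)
The plan is to deduce the corollary directly from Theorem \ref{principal theorem} together with the definition $M_c^F=\bigcup_{s=0}^{k-1}M_{(s)}^F$. The only ingredient I need beyond the theorem is the elementary observation that a union (finite or arbitrary) of invariant sets is again invariant, which I would simply state and apply.

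Concretely, I would argue as follows. Let $\Phi_t$ be the flow of \eqref{sys} and fix an arbitrary $x\in M_c^F$. By the definition of $M_c^F$ there is some $s\in\{0,\ldots,k-1\}$ with $x\in M_{(s)}^F$. By Theorem \ref{principal theorem} the set $M_{(s)}^F$ is invariant, so $\Phi_t(x)\in M_{(s)}^F$ for every $t$; since $M_{(s)}^F\subseteq M_c^F$, this yields $\Phi_t(x)\in M_c^F$. As $x$ was arbitrary, $M_c^F$ is invariant, which is the assertion.

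I do not expect any real obstacle, since the statement is an immediate consequence of the theorem; the only point worth noting is that equilibrium points cause no difficulty, exactly as in the proof of Theorem \ref{principal theorem}, because the orbit of an equilibrium point that is also a critical point is a singleton contained in $M_c^F$. If one preferred a self-contained proof not invoking the theorem, one could instead re-derive \eqref{formula principala} and observe that at a non-equilibrium point the invertibility of $\nabla\Phi_t(x)$ forces $rank\,\nabla\mathbf{F}(\Phi_t(x))=rank\,\nabla\mathbf{F}(x)<k$, hence $\Phi_t(x)\in M_c^F$; but routing the argument through Theorem \ref{principal theorem} is cleaner, and that is the version I would present.
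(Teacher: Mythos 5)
Your proof is correct and follows exactly the route the paper intends: the corollary is stated as an immediate consequence of Theorem \ref{principal theorem} via the decomposition $M_c^F=\bigcup_{s=0}^{k-1}M_{(s)}^F$, and a union of invariant sets is invariant. Your remark about equilibrium points is a sensible precaution but adds nothing beyond what the theorem already covers.
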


\begin{theorem}
The sets $N_{(r)}^F$ are invariant sets for the dynamics generated
by the differential equation (\ref{sys}).
\end{theorem}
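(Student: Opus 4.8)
The plan is to prove the statement by induction on $r\in\{1,\dots,q\}$, using two ingredients: the nested inclusions $N_{(q)}^F\subseteq N_{(q-1)}^F\subseteq\dots\subseteq N_{(1)}^F$ recorded in the Remark above, and the effect of differentiating the conservation identity $\mathbf{F}(\Phi_t(x))=\mathbf{F}(x)$ several times. As in the proof of Theorem~\ref{principal theorem}, I will treat equilibrium points separately: if $x$ is an equilibrium then $\Phi_t(x)=x$, so membership in $N_{(r)}^F$ is trivially preserved, and for all other $x$ the Jacobian $\nabla\Phi_t(x)$ is invertible (see \cite{abra}).

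For the base case $r=1$, I would simply note that $N_{(1)}^F=M_{(0)}^F$ by the first Remark, so invariance follows from Theorem~\ref{principal theorem}; alternatively it is immediate from formula~(\ref{formula principala}), since $\nabla\mathbf{F}(x)=0$ together with invertibility of $\nabla\Phi_t(x)$ forces $\nabla\mathbf{F}(\Phi_t(x))=0$. For the inductive step, assume the result for $r-1$ and take $x\in N_{(r)}^F\subseteq N_{(r-1)}^F$. By the inductive hypothesis $y:=\Phi_t(x)\in N_{(r-1)}^F$, i.e. every partial derivative of each $F_i$ of order at most $r-1$ vanishes at $y$. Now differentiate $\mathbf{F}\circ\Phi_t=\mathbf{F}$ exactly $r$ times in $x$ and expand the left-hand side by the multivariate Faà di Bruno formula. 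Every resulting term carries a factor $(\partial^{\beta}F_i)(y)$ with $1\le|\beta|\le r$; all terms with $|\beta|\le r-1$ vanish because $y\in N_{(r-1)}^F$, leaving only the pure first-order term, so that
\[
\sum_{j_1,\dots,j_r=1}^{n}\bigl(\partial_{j_1}\cdots\partial_{j_r}F_i\bigr)(y)\,\frac{\partial\Phi_t^{j_1}}{\partial x_{\alpha_1}}(x)\cdots\frac{\partial\Phi_t^{j_r}}{\partial x_{\alpha_r}}(x)=\bigl(\partial_{\alpha_1}\cdots\partial_{\alpha_r}F_i\bigr)(x)=0
\]
for all indices $\alpha_1,\dots,\alpha_r$, the last equality because $x\in N_{(r)}^F$. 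Reading the tensor $\bigl(\partial_{j_1}\cdots\partial_{j_r}F_i\bigr)(y)$ as a symmetric $r$-linear form on $\mathbb{R}^n$, the displayed identities say precisely that this form vanishes on every $r$-tuple of columns of $\nabla\Phi_t(x)$; since that matrix is invertible its columns form a basis of $\mathbb{R}^n$, and an $r$-linear form vanishing on all $r$-tuples from a basis is identically zero. Hence all order-$r$ derivatives of each $F_i$ vanish at $y$ too, so $y\in N_{(r)}^F$, which closes the induction.

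The main obstacle is the bookkeeping in the inductive step: one must invoke Faà di Bruno's formula in several variables and verify the clean dichotomy that, apart from the single pure first-order term, every contribution contains a derivative of $\mathbf{F}$ of order strictly less than $r$ — exactly the feature that makes the inductive hypothesis applicable. The only other point requiring care is the elementary multilinear-algebra fact turning "$\nabla\Phi_t(x)$ invertible" into "the order-$r$ derivative tensor of $\mathbf{F}$ at $\Phi_t(x)$ vanishes"; this, together with the separate treatment of equilibria, is what replaces the single application of invertibility used in the $r=1$ case.
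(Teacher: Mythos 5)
Your proof is correct and follows essentially the same route as the paper: induction on the order $r$, the key structural fact that the $r$-fold derivative of $\mathbf{F}\circ\Phi_t=\mathbf{F}$ equals the contraction of $\partial^{r}\mathbf{F}(\Phi_t(x))$ against $r$ copies of $\nabla\Phi_t(x)$ plus a remainder all of whose terms carry a derivative of $\mathbf{F}$ at $\Phi_t(x)$ of order $<r$, and invertibility of $\nabla\Phi_t(x)$ to conclude. The only differences are presentational: you invoke the multivariate Fa\`a di Bruno formula where the paper establishes the same derivative identity by its own induction on $l$, and you finish by noting that an $r$-linear form vanishing on all $r$-tuples from a basis is zero, where the paper equivalently contracts with the entries of $\nabla\Phi_t^{-1}(x)$.
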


\begin{proof}
Let $i\in \{1,...,k\}$, $l\in \{1,...,q\}$ and
$\mathbf{\alpha}=(\alpha_1,...,\alpha_l)\in \{1,...,n\}^l$. We
will prove by mathematical induction that
\begin{equation}\label{relatia baza}
\partial^{\mathbf{\alpha}}F_i(x)=\sum_{\beta_1,...,\beta_l=1}^n(\nabla\Phi_t)_{\alpha_1\beta_1}(x)...
(\nabla\Phi_t)_{\alpha_l\beta_l}(x)\partial^{\mathbf{\beta}}F_i(\Phi_t(x))+S_{i,\mathbf{\alpha}}(t,x),
\end{equation}
where $\mathbf{\beta}=(\beta_1,...,\beta_l)$ and
$S_{i,\mathbf{\alpha}}(t,x)$ is a sum with the property:
\emph{"all the terms contain a factor of the form
$\partial^{\mathbf{\gamma}}F_i(\Phi_t(x))$ with $|\gamma |<l$"}.

We will prove this result by mathematical induction with respect
to $l$. Componentwise the relation (\ref{formula principala})
implies our result for $l=1$.

Let $\mathbf{\alpha}'=(\alpha_1,...,\alpha_l,\alpha_{l+1})\in
\{1,...,n\}^{l+1}$ where
$\mathbf{\alpha}=(\alpha_1,...,\alpha_l)\in \{1,...,n\}^l$. Using
the induction hypothesis we have:
$$\partial^{\mathbf{\alpha}'}F_i(x)=\frac{\partial}{\partial x_{\alpha_{l+1}}}(\sum_{\beta_1,...,\beta_l=1}^n
(\nabla\Phi_t)_{\alpha_1\beta_1}(x)...(\nabla\Phi_t)_{\alpha_l\beta_l}(x)\partial^{\mathbf{\beta}}F_i(\Phi_t(x))
+\frac{\partial}{\partial
x_{\alpha_{l+1}}}(S_{i,\mathbf{\alpha}}(t,x)).$$ By a
straightforward computation we obtain
$$\frac{\partial}{\partial
x_{\alpha_{l+1}}}(\sum_{\beta_1,...,\beta_l=1}^n(\nabla\Phi_t)_{\alpha_1\beta_1}(x)...(\nabla\Phi_t)_{\alpha_l\beta_l}(x)
\partial^{\mathbf{\beta}}F_i(\Phi_t(x)))=$$
$$=\sum_{\beta_1,...,\beta_l=1}^n(\nabla\Phi_t)_{\alpha_1\beta_1}(x)...(\nabla\Phi_t)_{\alpha_l\beta_l}(x)
\frac{\partial}{\partial
x_{\alpha_{l+1}}}\partial^{\mathbf{\beta}}F_i(\Phi_t(x))+$$
$$+\sum_{\beta_1,...,\beta_l=1}^n \frac{\partial}{\partial
x_{\alpha_{l+1}}}((\nabla\Phi_t)_{\alpha_1\beta_1}(x)...(\nabla\Phi_t)_{\alpha_l\beta_l}(x))\partial^{\mathbf{\beta}}
F_i(\Phi_t(x))=$$
$$=\sum_{\beta_1,...,\beta_l,\beta_{l+1}=1}^n(\nabla\Phi_t)_{\alpha_1\beta_1}(x)...(\nabla\Phi_t)_{\alpha_l\beta_l}(x)
(\nabla\Phi_t)_{\alpha_{l+1}\beta_{l+1}}(x)
\partial^{\mathbf{\beta}'}F_i(\Phi_t(x))+$$
$$+\sum_{\beta_1,...,\beta_l=1}^n \frac{\partial}{\partial
x_{\alpha_{l+1}}}((\nabla\Phi_t)_{\alpha_1\beta_1}(x)...(\nabla\Phi_t)_{\alpha_l\beta_l}(x))\partial^{\mathbf{\beta}}
F_i(\Phi_t(x)).$$ We will note that
$$S_{i,\alpha '}(t,x)=\sum_{\beta_1,...,\beta_l=1}^n \frac{\partial}{\partial
x_{\alpha_{l+1}}}((\nabla\Phi_t)_{\alpha_1\beta_1}(x)...(\nabla\Phi_t)_{\alpha_l\beta_l}(x))
\partial^{\mathbf{\beta}}F_i(\Phi_t(x))+
\frac{\partial}{\partial
x_{\alpha_{l+1}}}(S_{i,\mathbf{\alpha}}(t,x)).$$ All the terms of
$S_{i,\alpha '}(t,x)$ contain a factor of the form
$\partial^{\mathbf{\gamma}}F_i(\Phi_t(x))$ with $|\gamma |<l+1$,
which had to be proved.

Let $\beta=(\beta_1,...,\beta_l)\in \{1,...,n\}^l$ and
$\nabla\Phi_t ^{-1}(x)$ be the inverse matrix of
$\nabla\Phi_t(x)$, where $x$ is not an equilibrium point for
(\ref{sys}). Consequently, we have
\begin{equation}\label{relatia baza 1}
\partial^{\mathbf{\beta}}F_i(\Phi_t(x))=\sum_{\alpha_1,...,\alpha_l=1}^n(\nabla\Phi_t)_{\beta_1\alpha_1}^{-1}(x)
...(\nabla\Phi_t)_{\beta_l\alpha_l}^{-1}(x)[\partial^{\mathbf{\alpha}}F_i(x)-S_{i,\mathbf{\alpha}}(t,x)].
\end{equation}

Also, we will prove by mathematical induction that the sets
$N_{(j)}^F$ are invariant under the dynamics generated by the
differential equation (\ref{sys}). For $j=1$ we have
$N_{(1)}^F=M_{(0)}^F$, which is an invariant set (see Theorem
\ref{principal theorem}). We suppose that for all $j\leq l$ the
sets $N_{(j)}^F$ are invariant. Let $x\in N_{(l+1)}^F$ be
arbitrary chosen. Using \eqref{relatia baza 1} for $l+1$-order of
derivation and the induction hypothesis, we deduce that
$\Phi_t(x)\in N_{(l+1)}^F,\,\,\,\forall t$. Summing up,
$N_{(j)}^F$ are invariant sets for all $j\in \{1,...,q\}$.
\end{proof}

\section{Finding solutions using simpler dynamics}

Let $F,G:\mathbb{R}^n\rightarrow \mathbb{R}$ be $C^q$ functions
and the differential equations:
\begin{equation}\label{unu}
    \dot{x}=f(x,\nabla F(x))
\end{equation}
and
\begin{equation}\label{doi}
    \dot{x}=f(x,\nabla G(x))
\end{equation}
where $f:\mathbb{R}^{2n}\rightarrow \mathbb{R}^n$ is a $C^q$
vectorial function.

For $x\in \mathbb{R}^n$, we denote with $\Phi_t^F(x)$ and
$\Phi_t^G(x)$ the solutions of (\ref{unu}) and (\ref{doi}) with
the initial conditions $\Phi_0^F(x)=x$ and $\Phi_0^G(x)=x$. We
will introduce the following set
$$E_1=\{x\in \mathbb{R}^n\,|\,\nabla F(x)=\nabla G(x)\}.$$

\begin{theorem}
If $F-G$ is a conserved quantity for (\ref{unu}) and $x\in E_1$,
then for all $t$ we have
$$\Phi_t^F(x)=\Phi_t^G(x).$$
\end{theorem}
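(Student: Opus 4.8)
The plan is to show that the two flows coincide by proving that $\Phi_t^F(x)$ solves the second differential equation \eqref{doi} with the same initial condition, and then invoke uniqueness of solutions. Set $y(t) = \Phi_t^F(x)$. By definition $y(0) = x$ and $\dot y(t) = f(y(t), \nabla F(y(t)))$. To conclude $y(t) = \Phi_t^G(x)$ it suffices, by the uniqueness theorem for ODEs (the function $f$ is $C^q$ with $q \ge 1$), to verify that $\dot y(t) = f(y(t), \nabla G(y(t)))$, i.e. that $\nabla F(y(t)) = \nabla G(y(t))$ for all $t$; this together with $y(0)=x$ forces $y = \Phi^G \cdot x$.

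The key step is therefore to prove that $E_1$ is invariant under the flow $\Phi_t^F$, since $x \in E_1$ gives $\nabla F(x) = \nabla G(x)$ and invariance then propagates this identity along the whole trajectory. To see the invariance, observe that $E_1$ is precisely the zero set of the gradient of $H := F - G$, that is
\begin{equation}
E_1 = \{x \in \mathbb{R}^n \mid \nabla H(x) = 0\} = M_{(0)}^H = N_{(1)}^H.
\end{equation}
Here $H = F-G : \mathbb{R}^n \to \mathbb{R}$ is, by hypothesis, a conserved quantity for the dynamical system \eqref{unu} (which has the form \eqref{sys} with right-hand side $x \mapsto f(x,\nabla F(x))$). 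Hence Theorem \ref{principal theorem}, applied with the scalar conserved quantity $H$ and $s = 0$, asserts exactly that $M_{(0)}^H$ is invariant under $\Phi_t^F$. Consequently, for $x \in E_1$ we get $\Phi_t^F(x) \in E_1$ for all $t$, which is the claimed pointwise identity $\nabla F(\Phi_t^F(x)) = \nabla G(\Phi_t^F(x))$.

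Combining the two observations: $y(t) = \Phi_t^F(x)$ satisfies $\dot y(t) = f(y(t), \nabla F(y(t))) = f(y(t), \nabla G(y(t)))$ because $y(t) \in E_1$, and $y(0) = x$; thus $y$ is a solution of \eqref{doi} with initial condition $x$, so by uniqueness $y(t) = \Phi_t^G(x)$ for all $t$ in the common interval of existence, which gives $\Phi_t^F(x) = \Phi_t^G(x)$. The only subtlety to watch is the equilibrium case: the cited invertibility of $\nabla \Phi_t$ in the proof of Theorem \ref{principal theorem} excludes equilibrium points, but if $x$ is an equilibrium of \eqref{unu} the conclusion is trivial (the trajectory is constant and lies in $E_1$, and one checks directly it is then also an equilibrium of \eqref{doi}), so no genuine obstacle arises there.
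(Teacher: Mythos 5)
Your proof is correct and follows essentially the same route as the paper: identify $E_1$ with $M_{(0)}^{F-G}$, invoke Theorem \ref{principal theorem} to get invariance of $E_1$ under the flow of \eqref{unu}, and conclude by uniqueness of solutions that $\Phi_t^F(x)$ also solves \eqref{doi}. Your extra remark handling the equilibrium case (where $\nabla\Phi_t$ need not be invertible) is a small but welcome addition that the paper leaves implicit.
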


\begin{proof} For $L=F-G$ we have the equality $E_1=M_{(0)}^L$. By Theorem (\ref{principal theorem}) the
 set $E_1$ is invariant under the dynamics of
(\ref{unu}). For $x\in E_1$ we have $\nabla F(\Phi_t^F(x))=\nabla
G(\Phi_t^F(x))$ for all $t$ and consequently
$$\frac{d}{dt}\Phi_t^F(x)=f(\Phi_t^F(x),\nabla F(\Phi_t^F(x))=f(\Phi_t^F(x),\nabla
G(\Phi_t^F(x)).$$ The above equality shows that $\Phi_t^F(x)$ is
also a solution for (\ref{doi}). Given the uniqueness of the
solutions, we obtain the desired equality.
\end{proof}
\bigskip

Next we will discus a particular case of the result presented
above. For this we will take $f(x,y)=h(x)+g(y)$ and $F\equiv 0$.
Thus we have the two dynamics
\begin{equation}\label{neperturbat}
\dot{x}=h(x)
\end{equation} and the perturbed dynamics
\begin{equation}\label{}
\dot{x}=h(x)+g(\nabla G(x)).
\end{equation}
We denote by $\Phi_t^h$ the flux for \eqref{neperturbat}.

\begin{corollary}\label{neperturb}
If $G$ is a conserved quantity for \eqref{neperturbat}, then for all
initial conditions $x\in\{x\in \mathbb{R}^n\,|\,\nabla G(x)=0\}$
we have
$$\Phi_t^h(x)=\Phi_t^G(x).$$
\end{corollary}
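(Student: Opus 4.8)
The plan is to deduce this corollary directly from the preceding theorem by specializing the two dynamics. First I would set $F\equiv 0$ and $f(x,y)=h(x)+g(y)$ in the framework of the theorem. With this choice, equation \eqref{unu} becomes $\dot x = f(x,\nabla F(x)) = h(x) + g(\nabla 0) = h(x) + g(0)$, which is not quite \eqref{neperturbat}; so more carefully one should absorb the constant $g(0)$, or simply observe that the cleanest route is to take $F\equiv 0$ so that $\nabla F \equiv 0$ and note $f(x,\nabla F(x)) = h(x)+g(0)$. To make the identification exact with $\dot x = h(x)$, I would instead point out that the role of ``$F$'' in the theorem should be played by a function with $\nabla F \equiv 0$ and absorb any additive constant, or — cleaner still — rephrase: take the first dynamics \eqref{unu} to be $\dot x = h(x)$ (the ``$F$'' dynamics, with $F$ a constant) and the second dynamics \eqref{doi} to be $\dot x = h(x) + g(\nabla G(x))$ (the ``$G$'' dynamics).

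Then $F - G = -G$ (up to an irrelevant additive constant), so $F-G$ is a conserved quantity for \eqref{neperturbat} precisely when $G$ is. The set $E_1 = \{x \mid \nabla F(x) = \nabla G(x)\}$ becomes $\{x \mid \nabla G(x) = 0\}$ since $\nabla F \equiv 0$. Applying the previous theorem verbatim to this pair of dynamics then yields: for every $x$ with $\nabla G(x)=0$, one has $\Phi_t^h(x) = \Phi_t^G(x)$ for all $t$, which is the claim. So the proof is essentially: ``This is a particular case of the previous theorem, obtained by taking $f(x,y)=h(x)+g(y)$ and $F$ constant; then $E_1 = \{\nabla G = 0\}$ and $F-G$ is conserved iff $G$ is.''

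The only genuine subtlety — and the step I would be most careful about — is the bookkeeping with the constant $g(0)$: one must be sure that the ``first'' dynamics in the theorem really is $\dot x = h(x)$ and not $\dot x = h(x) + g(0)$. The honest fix is to note that adding a constant vector field does not affect any of the hypotheses (conserved quantities are unaffected, and $E_1$ is unaffected since it only involves gradients), or, even more simply, to define $\tilde h(x) = h(x) + g(0)$ and observe the statement and its hypothesis are insensitive to this redefinition; alternatively one may just assume without loss of generality $g(0)=0$ since $g$ is only evaluated at $\nabla G(x)$, which is $0$ on $E_1$ anyway — on $E_1$, $g(\nabla G(x)) = g(0)$, and this is exactly the discrepancy, so on $E_1$ the two vector fields $h(x)$ and $h(x)+g(\nabla G(x))$ literally coincide, which is the real content. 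I would therefore structure the proof as: restrict attention to $x \in E_1 = \{\nabla G = 0\}$; invoke Theorem~\ref{principal theorem} (via the previous theorem) to get that $E_1$ is invariant under $\Phi_t^G$; on the invariant set $E_1$ the perturbation term $g(\nabla G(\cdot))$ equals the constant $g(0)$, and after this trivial normalization the two flows satisfy the same ODE along the orbit, so uniqueness of solutions forces $\Phi_t^h(x) = \Phi_t^G(x)$.
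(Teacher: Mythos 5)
Your proposal matches the paper's (implicit) argument exactly: the corollary is presented there with no separate proof, as an immediate specialization of the preceding theorem with $f(x,y)=h(x)+g(y)$ and $F\equiv 0$, so that $E_1=\{x\mid\nabla G(x)=0\}$ and $F-G=-G$ is conserved precisely when $G$ is. The $g(0)$ bookkeeping you flag is a real gap in the paper's presentation --- writing \eqref{neperturbat} as $\dot x=h(x)$ silently assumes $g(0)=0$ --- and your normalization (or the observation that on the invariant set the perturbation is the constant $g(0)$) resolves it correctly.
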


Another particular case is when the function $f$ in \eqref{unu}
and \eqref{doi} verifies the equality
$$<f(x,y),y>=0,$$
where $<\cdot,\cdot>$ is the Euclidian product on $\mathbb{R}^n$.
In this case $F$ is a conserved quantity for \eqref{unu} and $G$ is
a conserved quantity for \eqref{doi}. The above equality is verified
for the case when
$$f(x,y)=\Pi(x)y,$$
where $\Pi(x)$ is an antisymmetric matrix. This is true for all
almost Poisson manifolds (see \cite{ortega}). In this situation
the differential equations \eqref{unu} and \eqref{doi} become
\begin{equation}\label{poisson-unu}
    \dot{x}=\Pi(x)\nabla F(x)
\end{equation}
and
\begin{equation}\label{poisson-doi}
    \dot{x}=\Pi(x)\nabla G(x).
\end{equation}
We observe that $F$ is a conserved quantity for \eqref{poisson-doi}
if and only if $G$ is a conserved quantity for \eqref{poisson-unu}.

\begin{corollary}\label{simplectic} If $G$ is a conserved quantity for
\eqref{poisson-unu}, then for the initial conditions in $\{x\in
\mathbb{R}^n\,|\,\nabla F(x)=\nabla G(x)\}$ we have
$\Phi_t^F(x)=\Phi_t^G(x)$.
\end{corollary}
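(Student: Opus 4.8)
The plan is to derive this corollary from the Theorem proved above — the one asserting that if $F-G$ is a conserved quantity for \eqref{unu} and $x\in E_1$, then $\Phi_t^F(x)=\Phi_t^G(x)$ — by checking that its hypothesis holds in the present setting. First I would note that \eqref{poisson-unu} and \eqref{poisson-doi} are exactly the equations \eqref{unu} and \eqref{doi} for the common vector field $f(x,y)=\Pi(x)y$, and that the set $\{x\in\mathbb{R}^n\mid\nabla F(x)=\nabla G(x)\}$ appearing in the statement is precisely $E_1$. So everything reduces to showing that $L:=F-G$ is a conserved quantity for \eqref{poisson-unu}.

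To see this, I would differentiate $L$ along a solution $\Phi_t^F$ of \eqref{poisson-unu}. By the chain rule,
$$\frac{d}{dt}L(\Phi_t^F(x))=\langle\nabla F(\Phi_t^F(x))-\nabla G(\Phi_t^F(x)),\,\Pi(\Phi_t^F(x))\nabla F(\Phi_t^F(x))\rangle.$$
The contribution of $\nabla F$ is $\langle\nabla F,\Pi\nabla F\rangle$, which vanishes because $\Pi$ is antisymmetric — this is exactly the observation made just before the corollary, that $F$ is itself a conserved quantity for \eqref{poisson-unu}. The contribution of $-\nabla G$ is $-\langle\nabla G,\Pi\nabla F\rangle=-\tfrac{d}{dt}G(\Phi_t^F(x))$, which vanishes by the standing hypothesis that $G$ is a conserved quantity for \eqref{poisson-unu}. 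Hence $\tfrac{d}{dt}L(\Phi_t^F(x))\equiv0$, so $F-G$ is conserved along \eqref{poisson-unu}.

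With the hypothesis verified, the Theorem above applies with \eqref{unu} taken to be \eqref{poisson-unu} and \eqref{doi} taken to be \eqref{poisson-doi}: for any initial condition $x\in E_1=\{\nabla F=\nabla G\}$ we obtain $\Phi_t^F(x)=\Phi_t^G(x)$ for all $t$, as claimed. There is no real obstacle in this argument; the only point to be attentive to is that the cancellation in the derivative of $F-G$ splits into two independent parts — antisymmetry of $\Pi$ handles the $F$-term and the hypothesis on $G$ handles the $G$-term — and that \eqref{poisson-unu} and \eqref{poisson-doi} genuinely share the same underlying map $f$, which is what makes the Theorem applicable.
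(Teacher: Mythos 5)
Your argument is correct and is exactly the derivation the paper intends: the corollary is meant to follow from the preceding theorem once one observes that $F$ is automatically conserved for \eqref{poisson-unu} by antisymmetry of $\Pi$ and $G$ is conserved by hypothesis, so $F-G$ is a conserved quantity and the theorem applies with $E_1=\{\nabla F=\nabla G\}$. The paper leaves this verification implicit, but your write-up fills it in faithfully.
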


A particular case of Corollary \ref{simplectic} is when we have a
symplectic manifold with $G$ being a quadratic function. In this
case, the solutions of the Hamiltonian vector field $X_F$ starting
in $\{x\in \mathbb{R}^n\,|\,\nabla F(x)=\nabla G(x)\}$ are also
the solutions of the linear Hamiltonian system $X_G$.
\bigskip

Analogous results are valid for the more general case of the
vector valued conserved quantities and when the right side of
equations \eqref{unu} and \eqref{doi} depends on higher order
derivatives.

Firstly, we will introduce some notations. Let
$\mathbf{F},\mathbf{G}:\mathbb{R}^n\rightarrow \mathbb{R}^k$ be
$C^q$ vectorial functions with $q\geq 1$ and also $k\leq n$. If
$F_1,...,F_k$ are the components of $\mathbf{F}$ we denote with
\begin{equation*}
\Delta^1 \mathbf{F}(x)=(\frac{\partial F_1}{\partial x_1}(x),...,\frac{\partial F_1}{\partial x_n}(x),\frac{\partial F_2}{\partial
x_1}(x),...,\frac{\partial F_2}{\partial x_n}(x),...,\frac{\partial F_k}{\partial x_1}(x),...,\frac{\partial F_k}{\partial x_n}(x))\in
\mathbb{R}^{kn}.
\end{equation*}
and
\begin{equation*}
\Delta^r \mathbf{F}(x)=(...,\partial^{\alpha}F_i(x),...)\in
\mathbb{R}^{kn^r},
\end{equation*}
where $r\in \{1,...,q\}$, $i\in \{1,...,k\}$, $\alpha\in
\{1,...,n\}^r$ ($|\alpha|=r$) and the components appear in the
lexicographical order for $(i,\alpha)$ in $\mathbb{N}^{r+1}$.

For a fix $r\in \{1,...,q\}$, we will consider, as before, the two
differential equations
\begin{equation}\label{eqF}
\dot{x}=f_r(x,\Delta^1 \mathbf{F}(x),...,\Delta^r \mathbf{F}(x))
\end{equation}
and
\begin{equation}\label{eqG}
\dot{x}=f_r(x,\Delta^1 \mathbf{G}(x),...,\Delta^r \mathbf{G}(x)).
\end{equation}

Let $x\in \mathbb{R}^n$, we denote with $\Phi_t^\mathbf{F}(x)$ and
$\Phi_t^\mathbf{G}(x)$ the solutions of (\ref{eqF}) and
(\ref{eqG}) with initial conditions $\Phi_0^\mathbf{F}(x)=x$ and
$\Phi_0^\mathbf{G}(x)=x$. We will introduce the following set
$$E_r=\{x\in \mathbb{R}^n\,|\,\partial^{\alpha}\mathbf{F}(x)=\partial^{\alpha}\mathbf{G}(x),\,\,\,\forall |\alpha|\leq r\}.$$

\begin{theorem}\label{thprincipal}
If $\mathbf{F}-\mathbf{G}$ is a conserved quantity for \eqref{eqF}
and $x\in E_r$ then for all $t$ we have
$$\Phi_t^\mathbf{F}(x)=\Phi_t^\mathbf{G}(x).$$
\end{theorem}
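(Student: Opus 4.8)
The plan is to reduce the statement to the invariance result for the sets $N_{(r)}^{F}$, exactly in the spirit of the proof given above for $E_1$. Put $\mathbf{L}=\mathbf{F}-\mathbf{G}$ and write the two equations \eqref{eqF} and \eqref{eqG} in the autonomous form $\dot x=\widetilde f_{\mathbf{F}}(x)$ and $\dot x=\widetilde f_{\mathbf{G}}(x)$, where $\widetilde f_{\mathbf{H}}(x):=f_r\bigl(x,\Delta^1\mathbf{H}(x),\dots,\Delta^r\mathbf{H}(x)\bigr)$ for $\mathbf{H}\in\{\mathbf{F},\mathbf{G}\}$. By hypothesis $\mathbf{L}$ is a conserved quantity for $\dot x=\widetilde f_{\mathbf{F}}(x)$. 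Since the equality $\partial^\alpha\mathbf{F}(x)=\partial^\alpha\mathbf{G}(x)$ is equivalent to $\partial^\alpha L_i(x)=0$ for every component $L_i$ of $\mathbf{L}$, the set $E_r$ coincides with $N_{(r)}^{\mathbf{L}}$ (intersected with the level set $\{x\mid\mathbf{L}(x)=0\}$ if one reads $|\alpha|\le r$ as admitting $|\alpha|=0$). The set $N_{(r)}^{\mathbf{L}}$ is invariant under $\Phi_t^{\mathbf{F}}$ by the theorem asserting invariance of the sets $N_{(r)}^{F}$ — which is already stated for vector-valued conserved quantities, so it applies to $\mathbf{L}$ with no change — and $\{\mathbf{L}=0\}$ is invariant because $\mathbf{L}$ is conserved; hence $E_r$ is invariant under $\Phi_t^{\mathbf{F}}$.

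Now fix $x\in E_r$. By the invariance just established, $\Phi_t^{\mathbf{F}}(x)\in E_r$ for all $t$, so $\Delta^j\mathbf{F}\bigl(\Phi_t^{\mathbf{F}}(x)\bigr)=\Delta^j\mathbf{G}\bigl(\Phi_t^{\mathbf{F}}(x)\bigr)$ for every $j\in\{1,\dots,r\}$ and every $t$, and therefore $\widetilde f_{\mathbf{F}}\bigl(\Phi_t^{\mathbf{F}}(x)\bigr)=\widetilde f_{\mathbf{G}}\bigl(\Phi_t^{\mathbf{F}}(x)\bigr)$. Consequently
$$\frac{d}{dt}\Phi_t^{\mathbf{F}}(x)=\widetilde f_{\mathbf{F}}\bigl(\Phi_t^{\mathbf{F}}(x)\bigr)=\widetilde f_{\mathbf{G}}\bigl(\Phi_t^{\mathbf{F}}(x)\bigr),$$
which shows that $t\mapsto\Phi_t^{\mathbf{F}}(x)$ is a solution of \eqref{eqG} with initial value $x$. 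Since $t\mapsto\Phi_t^{\mathbf{G}}(x)$ is, by definition, the solution of \eqref{eqG} with the same initial value, uniqueness of solutions of the initial value problem yields $\Phi_t^{\mathbf{F}}(x)=\Phi_t^{\mathbf{G}}(x)$ for all $t$.

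The remaining points are routine and are handled as in the earlier proofs. In the theorem on invariance of $N_{(r)}^{F}$, equilibrium points of the ambient equation are excluded when $\nabla\Phi_t$ is inverted, but at an equilibrium point $x$ of $\dot x=\widetilde f_{\mathbf{F}}(x)$ one has $\Phi_t^{\mathbf{F}}(x)=x$ and the invariance of $E_r$ is immediate, so this case causes no trouble. One also needs $f_r$ together with the maps $x\mapsto\Delta^j\mathbf{F}(x)$, $x\mapsto\Delta^j\mathbf{G}(x)$ ($1\le j\le r\le q$) to be smooth enough for the initial value problems attached to \eqref{eqF} and \eqref{eqG} to possess unique solutions — the same tacit regularity used for $E_1$. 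I do not expect a genuine obstacle here: the only step that needs a little care is identifying $E_r$ with (an intersection involving) the already-established invariant set $N_{(r)}^{\mathbf{L}}$, after which the argument closes exactly as in the scalar case.
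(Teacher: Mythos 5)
Your proof is correct and is precisely the argument the paper intends: the paper in fact states Theorem \ref{thprincipal} without proof, merely asserting that the result is the ``analogous'' extension of the scalar first-order case $E_1$, and your reduction of $E_r$ to the invariant set $N_{(r)}^{\mathbf{F}-\mathbf{G}}$ (via the invariance theorem for the sets $N_{(r)}^{F}$, which is indeed stated for vector-valued conserved quantities) followed by the identification of right-hand sides along the trajectory and uniqueness of solutions is exactly that extension spelled out. Your side remarks --- on whether $|\alpha|=0$ is admitted in the definition of $E_r$, on equilibrium points, and on the regularity needed for uniqueness --- address the same tacit assumptions the paper makes in the $E_1$ case and do not change the substance.
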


Also the obvious extension of Corollary \ref{neperturb} takes
place.

\section{Invariant sets for Toda lattices}

\hspace{0.5cm} The Toda lattice describes the one-dimensional
motions of a chain of particles with nearest neighbor
interactions. For a chain of particles with the equal masses $m$,
Morikazu Toda came up with the choice of the interaction potential
$$V(r)=e^{-r}+r-1.$$
The system of motion reads explicitly
\begin{equation}
m\ddot{x_i}=e^{-(x_i-x_{i-1})}-e^{-(x_{i+1}-x_i)},\,\,\,i\in
\mathbb{Z}.
\end{equation}
This second order differential system is equivalent with the first
order differential system
\begin{equation}
\left\{%
\begin{array}{ll}
\dot{x_i}=u_i \\
m\dot{u_i}=e^{-(x_i-x_{i-1})}-e^{-(x_{i+1}-x_i)},\,\,\,i\in \mathbb{Z}, \\
\end{array}%
\right.
\end{equation}
where $u_i$ is the velocity of the particle $i$.

An equilibrium of the Toda lattice has the form
$$x_i=x_0+\lambda i,\,\,\,u_i=0,\,\,\,\lambda\in \mathbb{R}^*,\,\,\,x_0\in \mathbb{R},\,\,\,i\in \mathbb{Z}.$$
Let an equilibrium of the Toda lattice and $y_i=x_i-x_0-\lambda i$
the displacement of the $i$ particle from its equilibrium
position. The system in the variables $y_i$ and $u_i$ is
\begin{equation}\label{sistemul mecanic}
\left\{%
\begin{array}{ll}
\dot{y_i}=u_i \\
\dot{u_i}=\frac{e^{-\lambda}}{m}(e^{-(y_i-y_{i-1})}-e^{-(y_{i+1}-y_i)}),\,\,\,i\in \mathbb{Z} \\
\end{array}%
\right.
\end{equation}
Let us define
\begin{equation}\label{schimbare coordonate}
X_i:=\frac{e^{-\lambda}}{m}e^{-(y_{i+1}-y_i)},
\end{equation}
then the equations of motion become
\begin{equation}
\left\{%
\begin{array}{ll}\label{sistemul de baza}
\dot{X_i}=X_i(u_i-u_{i+1}) \\
\dot{u_i}=X_{i-1}-X_i,\,\,\,i\in \mathbb{Z}\,. \\
\end{array}%
\right.
\end{equation}

The following particular cases are interesting:

\noindent {\bf 1.} the case of a periodic lattice,
$X_{i+n}=X_i\,\,\forall i\in \mathbb{Z}$,

\noindent {\bf 2.} the case of a non-periodic lattice with the
boundary conditions $X_0=0$ (correspond to formally setting
$y_0=-\infty$) and $X_n=0$ (correspond to formally setting
$y_{n+1}=\infty$).

In both cases we investigate the motions of the particles $1$ to
$n$ ($n\in \mathbb{N}^*$).

\subsection{The case of a periodic lattice}

\hspace{0.5cm} In this case {\it M. H$\acute{e}$non} proves in \cite{henon} that
the following expressions are scalar conserved quantities
\begin{equation}
I_m=\sum u_{i_1}...u_{i_k}(-X_{j_1})...(-X_{j_l}),
\end{equation}
where $m\in \{1,...,n\}$ and the summation are extended to all
terms which satisfy the following conditions:

\noindent (i) the indices $i_1,...,i_k,j_1,j_1+1,...,j_l,j_l+1$,
which appear in the term (either explicitly, or implicitly through
a factor $X_j$) are all different (modulo $n$);

\noindent (ii) the number of these indices is $m$, i.e. $k+2l=m$.
Two terms differing only in the order of factors are not
considered different, and therefore only one of them appears in
the sum.

In \cite{flaschkaperiodic}, Flaschka has proved that the above
functions are conserved quantities using a Lax formulation. This was
generalized to arbitrary Lie algebras by Adler \cite{adler} and
Kostant \cite{kostant}.

The first three scalar conserved quantities, depending on the variables $(X_1,...,X_n,u_1,...,u_n)$, are
\begin{equation}
I_1=\sum_{1\leq i\leq n} u_i
\end{equation}
\begin{equation}
I_2=\sum _{1\leq i_1<i_2\leq n} u_{i_1}u_{i_2}-\sum_{1\leq j\leq
n} X_j
\end{equation}
\begin{equation}
I_3=\sum _{1\leq i_1<i_2<i_3\leq n}
u_{i_1}u_{i_2}u_{i_3}-\sum_{1\leq i,j\leq n,\ j\neq i,\,j\neq i-1
(mod\,n)} u_iX_j,\,\,\,(X_0=X_n).
\end{equation}

We will introduce the vectorial conserved quantities,
\begin{equation}\label{}
\mathbb{I}_{12}=(I_1,I_2),\,\,\mathbb{I}_{13}=(I_1,I_3),\,\,\mathbb{I}_{23}=(I_2,I_3),\,\,\mathbb{I}_{123}=(I_1,I_2,I_3).
\end{equation}

{\bf I. The case $n$ odd}

For this case we obtain as invariant sets only subsets of the set of equilibrium points or the empty set. More precisely, $M_{(0)}^{I_{1}}=M_{(0)}^{I_{2}}=M_{(0)}^{\mathbb{I}_{12}}=M_{(1)}^{\mathbb{I}_{12}}=
M_{(0)}^{\mathbb{I}_{13}}=M_{(0)}^{\mathbb{I}_{23}}=M_{(0)}^{\mathbb{I}_{123}}=M_{(1)}^{\mathbb{I}_{123}}=\emptyset$. The following are subsets of the set of equilibrium points:
\begin{equation*}
M_{(0)}^{I_3}=\{(0,...,0,0,...,0)\},
\end{equation*}
\begin{equation*}
M_{(1)}^{\mathbb{I}_{13}}=\{(X,...,X,0,...,0)\,|\,X\in
\mathbb{R}\},
\end{equation*}
\begin{equation*}
M_{(1)}^{\mathbb{I}_{23}}=\{(-\frac{n-1}{2}u^2,...,-\frac{n-1}{2}u^2,u,...,u)\,|\,u\in
\mathbb{R}\},
\end{equation*}
\begin{equation*}\label{}
M_{(2)}^{\mathbb{I}_{123}}=\{(X,...,X,u,...,u)\,|\,X,u\in
\mathbb{R}\}.
\end{equation*}

{\bf II. The case $n$ even}

We have, $M_{(0)}^{I_{1}}=M_{(0)}^{I_{2}}=M_{(0)}^{\mathbb{I}_{12}}=M_{(1)}^{\mathbb{I}_{12}}=
M_{(0)}^{\mathbb{I}_{13}}=M_{(0)}^{\mathbb{I}_{23}}=M_{(0)}^{\mathbb{I}_{123}}=M_{(1)}^{\mathbb{I}_{123}}=\emptyset$.
As nontrivial invariant sets we have the following:
\begin{equation*}
M_{(0)}^{I_3}=\{(X_1,X_2,...,X_1,X_2,u_1,u_2,...,u_1,u_2)\,|\,X_1+X_2=u_1u_2,\,u_1+u_2=0\},
\end{equation*}
\begin{equation*}
M_{(1)}^{\mathbb{I}_{13}}=\{(X_1,X_2,...,X_1,X_2,u_1,u_2,...,u_1,u_2)\,|\,u_1+u_2=0\},
\end{equation*}
\begin{equation*}
  M_{(1)}^{\mathbb{I}_{23}}=\{(X_1,X_2,...,X_1,X_2,u_1,u_2,...,u_1,u_2)\,|\,X_1+X_2=-\frac{n}{4}(u_1+u_2)^2+u_1u_2\},
\end{equation*}
\begin{equation*}\label{}
M_{(2)}^{\mathbb{I}_{123}}=\{(X_1,X_2,...,X_1,X_2,u_1,u_2,...,u_1,u_2)\,|\,X_1,X_2,u_1,u_2\in
\mathbb{R}\}.
\end{equation*}
We obtain $M_{(2)}^{\mathbb{I}_{123}}$ as the largest
invariant set and the restricted dynamics is the dynamics of two
particles
\begin{equation*}\label{}
    \left\{%
\begin{array}{ll}
    \dot{X}_1=X_1(u_1-u_2) \\
    \dot{X}_2=X_2(u_2-u_1) \\
    \dot{u}_1=X_2-X_1 \\
    \dot{u}_2=X_1-X_2 \\
\end{array}%
\right.
\end{equation*}
On the invariant set $M_{(1)}^{\mathbb{I}_{23}}$ we have the above
dynamics subject to $X_1+X_2=-\frac{n}{4}(u_1+u_2)^2+u_1u_2$. On
the invariant set $M_{(1)}^{\mathbb{I}_{13}}$ we have the above
dynamics subject to $u_1+u_2=0$ and on the invariant set
$M_{(0)}^{I_{3}}$ we have the above dynamics subject to
$u_1+u_2=0$ and $X_1+X_2=u_1u_2$.

For the sets $M_{(0)}^{I_3}$ and $M_{(1)}^{\mathbb{I}_{23}}$ the
variables $X_i$ have to take also negative values which from a
mathematical point of view is correct and can be the solutions for
the system \eqref{sistemul de baza}. As the mechanical system is
given by \eqref{sistemul mecanic} and we do the change of
variables \eqref{schimbare coordonate}, the variables $X_i$ have
to be strictly positive in order to have a physical meaning.
Consequently, from a mechanical point of view only the sets
$M_{(1)}^{\mathbb{I}_{13}}$ and $M_{(2)}^{\mathbb{I}_{123}}$ are
meaningful. The computations can be found in the Appendix.

\subsection{The case of non-periodic lattice}

It is known that if we have the matrices
$$L=\left(%
\begin{array}{ccccc}
  u_1 & X_1 & 0 & ... & 0 \\
  1 & u_2 & X_2 & ... & 0 \\
  ... & ... & ... & ... & ... \\
  0 & ... & 1 & u_{n-1} & X_{n-1} \\
  0 & ... & 0 & 1 & u_n \\
\end{array}%
\right)\,\,\,\texttt{and}\,\,\,B=\left(%
\begin{array}{ccccc}
  0 & -X_1 & 0 & ... & 0 \\
  0 & 0 & -X_2 & ... & 0 \\
  ... & ... & ... & ... & ... \\
  0 & ... & 0 & 0 & -X_{n-1} \\
  0 & ... & 0 & 0 & 0 \\
\end{array}%
\right),$$ then the system \eqref{sistemul de baza}, in this case,
has the Lax form
\begin{equation}
\dot{L}=[B,L],
\end{equation}
where $[B,L]=BL-LB$. Using the Flaschka theorem (see
\cite{flaschkaperiodic}) we have the following scalar conserved quantities depending on the variables $(X_1,...,X_{n-1},u_1,...,u_n)$
\begin{equation}
F_k=\frac{1}{k}tr(L^k),\,\,\,k\in\{1,...,n\}.
\end{equation}
For $k\in\{1,2,3\}$ we have
$$F_1=\sum_{i=1}^n u_i$$
$$F_2=\sum_{i=1}^n (X_i+\frac{u_i^2}{2})$$
$$F_3=\sum_{i=1}^{n-1} X_i(u_i+u_{i+1})+\frac{1}{3}\sum_{i=1}^n u_i^3.$$
We will introduce the vectorial conserved quantities,
\begin{equation}\label{}
\mathbb{F}_{12}=(F_1,F_2),\,\,\mathbb{F}_{13}=(F_1,F_3),\,\,\mathbb{F}_{23}=(F_2,F_3),\,\,\mathbb{F}_{123}=(F_1,F_2,F_3).
\end{equation}
As before we will distinguish two cases.\vspace{2mm}

{\bf I. The case $n$ odd}

 In this case we obtain as invariant sets only subsets of the set of equilibrium points or the empty set. More precisely, $M_{(0)}^{F_{1}}=M_{(0)}^{F_{2}}=M_{(0)}^{\mathbb{F}_{12}}=M_{(1)}^{\mathbb{F}_{12}}=
M_{(0)}^{\mathbb{F}_{13}}=M_{(0)}^{\mathbb{F}_{23}}=M_{(0)}^{\mathbb{F}_{123}}=M_{(1)}^{\mathbb{F}_{123}}=\emptyset$. The following are subsets of the set of equilibrium points:
\begin{equation*}
M_{(0)}^{F_3}=M_{(1)}^{\mathbb{F}_{13}}=\{(\underbrace{0,...,0}_{n-1},\underbrace{0,...,0}_{n})\},
\end{equation*}
\begin{equation*}
M_{(1)}^{\mathbb{F}_{23}}=\{(\underbrace{0,...,0}_{n-1},\underbrace{u_1,0,...,u_1,0,u_1}_n)\,|\,u_1\in \mathbb{R}\}\cup\{(\underbrace{0,...,0}_{n-1},
\underbrace{0,u_2,...,0,u_2,0}_n)\,|\,u_2\in\mathbb{R}\},
\end{equation*}
\begin{equation*}\label{}
M_{(2)}^{\mathbb{F}_{123}}=\{(\underbrace{0,...,0}_{n-1},\underbrace{u_1,u_2,...,u_1}_{n})\,|\,u_1,u_2\in \mathbb{R}\}.
\end{equation*}

{\bf II. The case $n$ even}

We have, $M_{(0)}^{F_{1}}=M_{(0)}^{F_{2}}=M_{(0)}^{\mathbb{F}_{12}}=M_{(1)}^{\mathbb{F}_{12}}=
M_{(0)}^{\mathbb{F}_{13}}=M_{(0)}^{\mathbb{F}_{23}}=M_{(0)}^{\mathbb{F}_{123}}=M_{(1)}^{\mathbb{F}_{123}}=\emptyset$.
As nontrivial invariant sets we have the following:
\begin{equation*}
M_{(0)}^{F_3}=\{(\underbrace{X,0,...,X,0,X}_{n-1},\underbrace{u_1,u_2,...,u_1,u_2}_n)\,|\,u_1+u_2=0,\,X=u_1u_2\},
\end{equation*}
\begin{equation*}
M_{(1)}^{\mathbb{F}_{13}}=\{(\underbrace{X,0,...,X,0,X}_{n-1},\underbrace{u_1,u_2,...,u_1,u_2}_n)\,|\,u_1+u_2=0\},
\end{equation*}
\begin{equation*}
  M_{(1)}^{\mathbb{F}_{23}}=\{(\underbrace{X,0,...,X,0,X}_{n-1},\underbrace{u_1,u_2,...,u_1,u_2}_n)\,|\,X=u_1u_2\},
\end{equation*}
\begin{equation*}\label{}
M_{(2)}^{\mathbb{F}_{123}}=\{(\underbrace{X,0,...,X,0,X}_{n-1},\underbrace{u_1,u_2,...,u_1,u_2}_n)\,|\,X,u_1,u_2\in \mathbb{R}\}.
\end{equation*}
We obtain $M_{(2)}^{\mathbb{F}_{123}}$ as the largest
invariant set and the restricted dynamics is given by
\begin{equation*}\label{}
    \left\{%
\begin{array}{ll}
    \dot{X}=X(u_1-u_2) \\
    \dot{u}_1=-X \\
    \dot{u}_2=X \\
\end{array}%
\right.
\end{equation*}
On the invariant set $M_{(1)}^{\mathbb{F}_{23}}$ we have the above
dynamics subject to $X=u_1u_2$. On the invariant set
$M_{(1)}^{\mathbb{F}_{13}}$ we have the above dynamics subject to
$u_1+u_2=0$ and on the invariant set $M_{(0)}^{F_{3}}$ we have the
above dynamics subject to $u_1+u_2=0$ and $X=u_1u_2$.

For the set $M_{(2)}^{\mathbb{F}_{123}}$ the variables $X_i$ with
$i$ even are all equal with zero which from a mathematical point
of view is correct. As before, the mechanical system is given by
\eqref{sistemul mecanic} and we do the change of variables
\eqref{schimbare coordonate}. Consequently, the variables $X_i$
have to be strictly positive in order to have a physical meaning. The computations can be found in the Appendix.

\section{Appendix}

{\bf The computations for the case of periodic lattice.}\vspace{2mm}

\noindent We will make the notations
\begin{equation}
U=\sum_{1\leq i\leq n}u_i,\,\,\,V=\sum_{1\leq i_1<i_2\leq
n}u_{i_1}u_{i_2},\,\,\,Y=\sum_{1\leq j\leq n}X_j.
\end{equation}
We observe that
\begin{equation}\label{2V}
2V=U^2-\sum_{i=1}^n u_i^2.
\end{equation}
With these notations we have the following,
\begin{equation}
\nabla I_1=(0,...,0,1,...,1)
\end{equation}
\begin{equation}
\nabla
I_2=(-1,...,-1,\underbrace{U-u_1}_{n+1},...,\underbrace{U-u_k}_{n+k},...,\underbrace{U-u_n}_{2n})
\end{equation}
\begin{equation}
\nabla
I_3=(...,\underbrace{-(U-u_k-u_{k+1})}_{k},...,\underbrace{V-u_k(U-u_k)-(Y-X_{k-1}-X_k)}_{n+k},...).
\end{equation}

\noindent {\bf The study of $M_{(0)}^{I_3}$}.

\noindent The elements of $M_{(0)}^{I_3}$ are the solutions of the algebraic
system
\begin{equation}\label{sistemul1}
\left\{%
\begin{array}{ll}
U-u_k-u_{k+1}=0, \\
V-u_k(U-u_k)-(Y-X_{k-1}-X_k)=0,\,\,\,\forall k\in\{1,...,n\}.
\end{array}%
\right.
\end{equation}
Adding the first $n$ equations, we obtain $U=0$ and
$u_1+u_2=u_2+u_3=...=u_{n-1}+u_n=u_n+u_1.$ We deduce the following
results:\vspace{2mm}

\noindent {\bf The case $n\in 2\mathbb{N}+1$}. In this
situation we have $u_1=u_2=...=u_n=0$. Adding the last $n$
equations of \eqref{sistemul1} we obtain
$Y=0\,\,\texttt{and}\,\,\,X_1+X_2=X_2+X_3=...=X_{n-1}+X_n=X_n+X_1.$
It implies that $X_1=...=X_n=0$ and consequently
\begin{equation*}
M_{(0)}^{I_3}=\{(0,...,0,0,...,0)\}.
\end{equation*}

\noindent {\bf The case $n\in 2\mathbb{N}$}. For this situation
$u_i=(-1)^{i+1}u,\,\,\,u\in \mathbb{R}$.
In this case, using \eqref{2V}, we have
$V=-\frac{n}{2}u^2$. The last $n$ equations of \eqref{sistemul1}
become
$$-Y+X_{k-1}+X_k=(\frac{n}{2}-1)u^2\,\,\,\forall k\in \{1,...,n\}.$$
Making the addition we have $Y=-\frac{n}{2}u^2$ and
consequently $X_1+X_2=X_2+X_3=...=X_{n-1}+X_n=X_n+X_1=-u^2$ which
implies
$X_1=X_3=...=X_{n-1},\,\,\,\texttt{and}\,\,\,X_2=X_4=...=X_{2n}.$
In this case we have
\begin{equation*}
M_{(0)}^{I_3}=\{(X_1,X_2,...,X_1,X_2,u_1,u_2,...,u_1,u_2)\,|\,X_1+X_2=u_1u_2,\,u_1+u_2=0\}.
\end{equation*}

\noindent {\bf The study of $M_{(0)}^{\mathbb{I}_{ij}}$,
$M_{(1)}^{\mathbb{I}_{ij}}$ with $(i,j)\in
\{(1,2),(1,3),(2,3)\}.$\vspace{2mm} }

\noindent A point $(X_1,...,X_n,u_1,...,u_n)\in M_{(1)}^{\mathbb{I}_{13}}$
if and only if we have, for all $k,q\in \{1,...,n\}$,
\begin{equation}
\left\{%
\begin{array}{ll}
U-u_k-u_{k+1}=0 \\
V-u_k(U-u_k)-(Y-X_{k-1}-X_k)=V-u_q(U-u_q)-(Y-X_{q-1}-X_q). \\\end{array}%
\right.
\end{equation}
Adding the first $n$ equations we obtain $U=0$,
$u_1+u_2=u_2+u_3=...=u_{n-1}+u_n=u_n+u_1$ and consequently
$u_i=(-1)^{i+1}u,\,\,\,u\in \mathbb{R}$. The last $n$ equations
become $X_1+X_2=X_2+X_3=...=X_{n-1}+X_n=X_n+X_1$. \vspace{2mm}

\noindent {\bf The case $n\in 2\mathbb{N}+1$}. For this case
note that $u_1=u_2=...=u_n=0$ and $X_1=...=X_n=X\in \mathbb{R}$.
In this case we have
\begin{equation*}
M_{(1)}^{\mathbb{I}_{13}}=\{(X,...,X,0,...,0)\,|\,X\in
\mathbb{R}\}.
\end{equation*}

\noindent {\bf The case $n\in 2\mathbb{N}$}. It is easy to
see that $X_1=X_3=...=X_{n-1},\,\,\,X_2=X_4=...=X_{n}.$ In this
case we have
\begin{equation*}
M_{(1)}^{\mathbb{I}_{13}}=\{(X_1,X_2,...,X_1,X_2,u_1,u_2,...,u_1,u_2)\,|\,u_1+u_2=0\}.
\end{equation*}
A point belongs to the set $M_{(1)}^{\mathbb{I}_{23}}$ if and only
if
\begin{equation}\label{conditii}
det(A_{kq})=0,\,\,\,\det(B_{kq})=0,\,\,\,det(C_{kq})=0,\,\,\,\forall
k,q\in \{1,...,n\},
\end{equation}
where
\begin{equation*}
A_{kq}=\left(%
\begin{array}{cc}
  -1 & -1 \\
  -(U-u_k-u_{k+1}) & -(U-u_q-u_{q+1}) \\
\end{array}%
\right)
\end{equation*}

\begin{equation*}
B_{kq}=\left(%
\begin{array}{cc}
  -1 & U-u_q \\
  -(U-u_k-u_{k+1}) & V-u_q(U-u_q)-(Y-X_{q-1}-X_q) \\
\end{array}%
\right)
\end{equation*}

\begin{equation*}
C_{kq}=\left(%
\begin{array}{cc}
 U-u_k  & U-u_q \\
 V-u_k(U-u_k)-(Y-X_{k-1}-X_k) & V-u_q(U-u_q)-(Y-X_{q-1}-X_q) \\
\end{array}%
\right).
\end{equation*}
Using the expression of $A_{kq}$ we deduce that
$u_1+u_2=u_2+u_3=...=u_{n-1}+u_n=u_n+u_1$. \vspace{2mm}

\noindent {\bf The case $n\in 2\mathbb{N}+1$.} In this case we
have $u_1=u_2=...=u_n=u$, and $U=nu$, and $V=\frac{(n-1)n}{2}u^2$
and
$$B_{kq}=\left(%
\begin{array}{cc}
  -1 & (n-1)u \\
  -(n-2)u & \frac{(n-2)(n-1)}{2}u^2-(Y-X_q-X_{q-1}) \\
\end{array}%
\right).$$ Using the condition that $det (B_{kq})=0$, we obtain
$$Y-X_q-X_{q-1}=-\frac{(n-2)(n-1)}{2}u^2,\,\,\,\forall q\in \{1,...,n\}.$$
Adding these relations we have
$$Y=-\frac{(n-1)n}{2}u^2,\,\,\,\texttt{and}\,\,\,X_1=X_2=....=X_n=X\,\,\,\texttt{and}\,\,\,X=-\frac{n-1}{2}u^2.$$
With this relation we have
$$C_{kq}=\left(%
\begin{array}{cc}
 (n-1)u  & (n-1)u \\
 (n-2)(n-1)u^2 & (n-2)(n-1)u^2 \\
\end{array}%
\right).$$ We observe that the equality $det (C_{kq})=0$ is
verified.
In conclusion we have
\begin{equation*}
M_{(1)}^{\mathbb{I}_{23}}=\{(-\frac{n-1}{2}u^2,...,-\frac{n-1}{2}u^2,u,...,u)\,|\,u\in
\mathbb{R}\}.
\end{equation*}

\noindent {\bf The case $n\in 2\mathbb{N}$.} In this case
$u_1=u_3=...=u_{n-1},\,\,\,u_2=u_4=....=u_n$ and we have
$U=\frac{n}{2}(u_1+u_2)$. Using \eqref{2V} we obtain
$V=\frac{n(n-2)}{8}(u_1^2+u_2^2)+\frac{n^2}{4}u_1u_2$. From the
relation $det(B_{kq})=0$, we have
\begin{equation}\label{relatieY}
Y-X_{q-1}-X_q=V-(U-u_q)(\frac{n-2}{n}U+u_q).
\end{equation}
Consequently,
$X_1+X_2=X_3+X_4=...=X_{n-1}+X_n\,\,\,\texttt{and}\,\,\,X_2+X_3=X_4+X_5=...=X_{n}+X_1$
which further implies that
$X_1=X_3=...=X_{n-1},\,\,\,X_2=X_4=...=X_n\,\,\,\texttt{and}\,\,\,Y=\frac{n}{2}(X_1+X_2).$
By substitution into \eqref{relatieY} we obtain
$X_1+X_2=-\frac{n}{4}(u_1+u_2)^2+u_1u_2.$
We have
\begin{equation*}\begin{array}{c}
  M_{(1)}^{\mathbb{I}_{23}}=\{(X_1,X_2,...,X_1,X_2,u_1,u_2,...,u_1,u_2)\,|\,X_1+X_2=-\frac{n}{4}(u_1+u_2)^2+u_1u_2\}.
  \end{array}
\end{equation*}

\noindent {\bf The study of $M_{(2)}^{\mathbb{I}_{123}}$}.
\vspace{2mm}

\noindent We introduce, for
$k,q,r\in \{1,...,n\}$, the matrices
\begin{equation*}\label{}
A_{kqr}=\left(%
\begin{array}{ccc}
  0 & 0 & 1 \\
  -1 & -1 & U-u_r \\
  -(U-u_k-u_{k+1}) & -(U-u_q-u_{q+1}) & V_r \\
\end{array}%
\right)
\end{equation*}
\begin{equation*}\label{}
B_{kqr}=\left(%
\begin{array}{ccc}
  0 & 1 & 1 \\
  -1 & U-u_q & U-u_r \\
  -(U-u_k-u_{k+1}) & V_q & V_r \\
\end{array}%
\right)
\end{equation*}
\begin{equation*}\label{}
C_{kqr}=\left(%
\begin{array}{ccc}
  1 & 1 & 1 \\
  U-u_k & U-u_q & U-u_r \\
  V_k & V_q & V_r \\
\end{array}%
\right),
\end{equation*}
where $V_s=V-u_s(U-u_s)-(Y-X_{s-1}-X_s)$.
We observe that $rank (\nabla
\mathbb{I}_{123})=2\,\,\Leftrightarrow\,\,det (A_{kqr})=det
(B_{kqr})=det (C_{kqr})=0\,\,\forall k,q,r\in\{1,...,n\}$. The
equations $det (A_{kqr})=0\,\,\forall k,q,r\in\{1,...,n\}$ give us
$$u_1+u_2=u_2+u_3=...=u_{n-1}+u_n=u_n+u_1.$$

\noindent {\bf The case $n\in 2\mathbb{N}+1$}. We have $u:=u_1=...=u_n$.
With this notation we obtain $U=nu$, and $V=\frac{n(n-1)}{2}u^2$
and $V_k=-\frac{n(n-1)}{2}u^2-(Y-X_{k-1}-X_k)$. It is easy to see
that $det (C_{kqr})=0$. We have the equivalences
$$det (B_{kqr})=0\,\,\Leftrightarrow\,\,det \left(%
\begin{array}{ccc}
  0 & 0 & 1 \\
  -1 & 0 & (n-1)u \\
  V_k & V_q-V_r & V_r \\
\end{array}%
\right)=0$$
$$\Leftrightarrow\,\,V_r=V_q\,\,\Leftrightarrow\,\,X_{r-1}+X_r=X_{q-1}+X_q.$$
Because $n\in 2\mathbb{N}+1$, we obtain $X_1=X_2=...=X_n.$ In this
case we have
\begin{equation*}\label{}
M_{(2)}^{\mathbb{I}_{123}}=\{(X,...,X,u,...,u)\,|\,X,u\in
\mathbb{R}\}.
\end{equation*}

\noindent {\bf The case $n\in 2\mathbb{N}$}. We have
$u_1=u_3=...=u_{n-1},\,\,\,u_2=u_4=....=u_n$. By calculus we
obtain
$$det(C_{kqr})=0\,\,\Leftrightarrow\,\,det \left(%
\begin{array}{ccc}
  1 & 0 & 0 \\
  U-u_k & u_k-u_q & u_k-u_r \\
  V_k & V_q-V_k & V_r-V_k \\
\end{array}%
\right)=0.$$ If $q=k+1$ and $r=k+2$ we obtain $V_k=V_{k+2}$, which
implies that
$X_1+X_2=X_3+X_4=...=X_{n-1}+X_n$ and $X_2+X_3=X_4+X_5=...=X_n+X_1$.
Consequently, we have
$X_1=X_3=...=X_{n-1}$ and $X_2=X_4=...=X_n.$
We observe that if $s-t\in 2\mathbb{Z}$, then $u_s=u_t$ and
$V_s=V_t$, which implies that $det(C_{kqr})=0\,\,\,\forall
k,q,r\in \{1,...,n\}$.
For the matrices $B_{kqr}$ we have the following properties:

\noindent i) $det B_{111}=detB_{112}=0$ (by calculus).

\noindent ii) $det B_{kqr}=-det B_{krq}$.

\noindent iii) If $q_1-q_2\in 2\mathbb{N}$ and $r_1-r_2\in
2\mathbb{N}$ then $det B_{k_1q_1r_1}=det B_{k_2q_2r_2}$.

\noindent Using these properties we deduce that $det B_{kqr}=0,\,\,\,\forall
k,q,r\in \{1,...,n\}$ and we obtain
\begin{equation*}\label{}
M_{(2)}^{\mathbb{I}_{123}}=\{(X_1,X_2,...,X_1,X_2,u_1,u_2,...,u_1,u_2)\,|\,X_1,X_2,u_1,u_2\in
\mathbb{R}\}.
\end{equation*}

\noindent {\bf The computations for the case of non-periodic lattice.}\vspace{2mm}

\noindent If we consider the variables $(X_1,...,X_{n-1},u_1,...,u_n)$ we
obtain
\begin{equation}
\nabla F_1=(\underbrace{0,...,0}_{n-1},\underbrace{1,...,1}_n)
\end{equation}
\begin{equation}
\nabla F_2=(\underbrace{1,...,1}_{n-1},u_1,...,u_n)
\end{equation}
\begin{equation}
\nabla
F_3=(u_1+u_2,...,u_{n-1}+u_n,X_1+u_1^2,X_2+X_1+u_2^2,...,X_{n-2}+X_{n-1}+u_{n-1}^2,X_{n-1}+u_n^2).
\end{equation}

\noindent {\bf The study of $M_{(0)}^{F_3}$}.

\noindent The elements of $M_{(0)}^{F_3}$ are the solutions of the system
$$\left\{%
\begin{array}{ll}
u_1+u_2=u_2+u_3=...=u_{n-1}+u_n=0 \\
X_1+u_1^2=X_2+X_1+u_2^2=...=X_{n-2}+X_{n-1}+u_{n-1}^2=X_{n-1}+u_n^2=0. \\\end{array}%
\right.$$

\noindent {\bf The case $n\in 2\mathbb{N}+1$}. We obtain
$$M_{(0)}^{F_3}=\{(\underbrace{0,...,0}_{n-1},\underbrace{0,...,0}_n)\}.$$

\noindent {\bf The case $n\in 2\mathbb{N}$}. In this
situation $u_1=u_3=...=u_{n-1}=u,\,\,\,u_2=u_4=...=u_n=-u$ and
$X_k=-u^2$ if $k\in 2\mathbb{N}+1$ and $X_k=0$ if $k\in
2\mathbb{N}$. We have
$$M_{(0)}^{F_3}=\{(\underbrace{X,0,...,X,0,X}_{n-1},\underbrace{u_1,u_2,...,u_1,u_2}_n)\,|\,u_1+u_2=0,\,X=u_1u_2\}.$$

\noindent {\bf The study of $M_{(0)}^{\mathbb{F}_{ij}}$,
$M_{(1)}^{\mathbb{F}_{ij}}$ with $(i,j)\in \{(1,2),(1,3),(2,3)\}$
}.\vspace{2mm}

\noindent The elements of $M_{(1)}^{\mathbb{F}_{13}}$ are the solutions of
the system
$$\left\{%
\begin{array}{ll}
u_1+u_2=u_2+u_3=...=u_{n-1}+u_n=0 \\
X_1+u_1^2=X_2+X_1+u_2^2=...=X_{n-2}+X_{n-1}+u_{n-1}^2=X_{n-1}+u_n^2. \\\end{array}%
\right.$$

\noindent {\bf The case $n\in 2\mathbb{N}+1$}. We obtain that
$$M_{(1)}^{\mathbb{F}_{13}}=\{(\underbrace{0,...,0}_{n-1},\underbrace{0,...,0}_n)\}.$$

\noindent {\bf The case $n\in 2\mathbb{N}$}. In this
situation $u_1=u_3=...=u_{n-1}=u,\,\,\,u_2=u_4=...=u_n=-u$ and
$X_k=X$ if $k\in 2\mathbb{N}+1$ and $X_k=0$ if $k\in 2\mathbb{N}$.
We have
$$M_{(1)}^{\mathbb{F}_{13}}=\{(\underbrace{X,0,...,X,0,X}_{n-1},\underbrace{u_1,u_2,...,u_1,u_2}_n)\,|\,u_1+u_2=0\}.$$

\noindent For a point of the set $M_{(1)}^{\mathbb{F}_{23}}$ we have
\begin{equation}\label{conditii}
det(A_{kq})=0,\,\,\,\det(B_{kq})=0,\,\,\,det(C_{kq})=0,\,\,\,\forall
k,q
\end{equation}
where
\begin{equation}
A_{kq}=\left(%
\begin{array}{cc}
1 & 1 \\
u_k+u_{k+1} & u_q+u_{q+1} \\
\end{array}%
\right),\,\,\,k,q\in\{1,...,n-1\}
\end{equation}

\begin{equation}
B_{kq}=\left(%
\begin{array}{cc}
  1 & u_q \\
  u_k+u_{k+1} & X_{q-1}+X_q+u_q^2 \\
\end{array}%
\right),\,\,\,k\in\{1,...,n-1\}\,\,\,\texttt{and}\,\,\,q\in\{1,...,n\}
\end{equation}

\begin{equation}
C_{kq}=\left(%
\begin{array}{cc}
u_k & u_q \\
X_{k-1}+X_k+u_k^2 & X_{q-1}+X_q+u_q^2 \\
\end{array}%
\right),\,\,\,k,q\in\{1,...,n\}.
\end{equation}
Using the expression of $A_{kq}$ we deduce that
$u_1+u_2=u_2+u_3=...=u_{n-1}+u_n$ and consequently $u_k=u_q$ if
$k-q\in \,2\mathbb{Z}$. The matrices $B_{kq}$ have the form
$$B_{kq}=\left(%
\begin{array}{cc}
  1 & u_1 \\
  u_1+u_2 & X_{q-1}+X_q+u_1^2 \\
\end{array}%
\right)\,\,\,\texttt{if}\,\,\,q\in 2\mathbb{N}+1$$and
$$B_{kq}=\left(%
\begin{array}{cc}
  1 & u_2 \\
  u_1+u_2 & X_{q-1}+X_q+u_2^2 \\
\end{array}%
\right)\,\,\,\texttt{if}\,\,\,q\in 2\mathbb{N}.$$ We have $det
B_{kq}=0$ if and only if
$X_1=X_1+X_2=...=X_{n-2}+X_{n-1}=X_{n-1}=u_1u_2$.\vspace{2mm}

\noindent {\bf The case $n\in 2\mathbb{N}+1$}. It is easy to
see that $X_1=X_2=...=X_{n-1}=0$ and $u_1u_2=0$. We observe that
$$M_{(1)}^{\mathbb{F}_{23}}=\{(\underbrace{0,...,0}_{n-1},\underbrace{u_1,0,...,u_1,0,u_1}_n)\,|\,u_1\in \mathbb{R}\}\cup\{(\underbrace{0,...,0}_{n-1},
\underbrace{0,u_2,...,0,u_2,0}_n)\,|\,u_2\in\mathbb{R}\}.$$

\noindent {\bf The case $n\in 2\mathbb{N}$}. We deduce that
$X_1=X_3=...=X_{n-1}=X$, $X_2=X_4=...=X_{n-2}=0$ and $u_1u_2=X$.
In this case we have
$$M_{(1)}^{\mathbb{F}_{23}}=\{(\underbrace{X,0,X,...0,X}_{n-1},\underbrace{u_1,u_2,...,u_1,u_2}_{n})\,|\,X=u_1u_2\}.$$

\noindent {\bf The study of $M_{(2)}^{\mathbb{F}_{123}}$}.

\noindent For a point of the set $M_{(2)}^{\mathbb{F}_{123}}$ we have
\begin{equation}\label{rang2}
det(A_{kqr})=0,\,\,\,\det(B_{kqr})=0,\,\,\,det(C_{kqr})=0,\,\,\,\forall
k,q,r
\end{equation}
where
\begin{equation}
A_{kqr}=\left(%
\begin{array}{ccc}
0 & 0 & 1 \\
1 & 1 & u_r \\
u_k+u_{k+1} & u_q+u_{q+1} & X_{r-1}+X_r+u_r^2 \\
\end{array}%
\right),\,\,\,k,q\in\{1,...,n-1\},\,\,\,r\in\{1,...,n\}
\end{equation}

\begin{equation}
B_{kqr}=\left(%
\begin{array}{ccc}
0 & 1 & 1 \\
  1 & u_q & u_r \\
  u_k+u_{k+1} & X_{q-1}+X_q+u_q^2 & X_{r-1}+X_r+u_r^2 \\
\end{array}%
\right),\,\,\,k\in\{1,...,n-1\},\,q,r\in\{1,...,n\}
\end{equation}

\begin{equation}
C_{kqr}=\left(%
\begin{array}{ccc}
1 & 1& 1 \\
u_k & u_q & u_r\\
X_{k-1}+X_k+u_k^2 & X_{q-1}+X_q+u_q^2 & X_{r-1}+X_r+u_r^2 \\
\end{array}%
\right),\,\,\,k,q,r\in\{1,...,n\}
\end{equation}
Using the expression of $A_{kqr}$ we deduce that
$u_1+u_2=u_2+u_3=...=u_{n-1}+u_n$ and consequently $u_k=u_q$ if
$k-q\in \,2\mathbb{Z}$. For $k\in \{1,...,n-1\}$, we have
$$det B_{k,k+1,k}=det \left(%
\begin{array}{ccc}
0 & 0 & 1 \\
1 & u_{k+1}-u_k & u_k \\
u_k+u_{k+1} & X_{k+1}-X_{k-1}+u_{k+1}^2-u_k^2 & X_{k-1}+X_k+u_k^2 \\
\end{array}%
\right)=0$$ and we deduce that $X_{k+1}=X_{k-1}$.\vspace{2mm}

\noindent {\bf The case $n\in 2\mathbb{N}+1$}. It is easy to
see that $X_1=X_2=...=X_{n-1}=0$, $u_1=u_3=...=u_n$ and
$u_2=u_4=...=u_{n-1}$. All the conditions \eqref{rang2} are
verified and the set is
$$M_{(2)}^{\mathbb{F}_{123}}=\{(\underbrace{0,...,0}_{n-1},\underbrace{u_1,u_2,...,u_1}_{n})\,|\,u_1,u_2\in \mathbb{R}\}.$$

\noindent {\bf The case $n\in 2\mathbb{N}$}. It is easy to
see that $X_1=X_3=...=X_{n-1}=X$, $X_2=X_4=...=X_{n-2}=0$,
$u_1=u_3=...=u_{n-1}$ and $u_2=u_4=...=u_n$. All the conditions
\eqref{rang2} are verified and the set is
$$M_{(2)}^{\mathbb{F}_{123}}=\{(\underbrace{X,0,...,X,0,X}_{n-1},\underbrace{u_1,u_2,...,u_1,u_2}_{n})\,|\,X,u_1,u_2\in \mathbb{R}\}.$$

{\bf Acknowledgments}. Petre Birtea has been supported by CNCSIS {
UEFISCSU, project number PN II - IDEI 1081/2008.

\end{document}